\newtheorem{thm}{Theorem}[section]
\newtheorem{cor}[thm]{Corollary}
\newtheorem{lem}[thm]{Lemma}
\theoremstyle{definition}
\numberwithin{equation}{section}
\begin{document}

\title[ On Heegaard splittings with finite many pairs of disjoint compression disks]
{ On Heegaard splittings with finite many pairs of disjoint compression disks}

\author{Qiang E}
\address{School of science, Dalian Maritime University, Dalian, P.R.China} \email{eqiang@dlmu.edu.cn}
\author{Zhiyan Zhang}
\address{School of science, Dalian Maritime University, Dalian, P.R.China} \email{18234504990@qq.com}

\thanks{This research is supported by  grants
 of NSFC (No.11401069 and No.11671064), }

\subjclass[2020]{57K31, 57K20}

\keywords{3-manifolds,  Heegaard splitting, weakly reducible }

\begin{abstract}
Suppose $V\cup_S W$ is a weakly reducible Heegaard splitting of a closed 3-manifold which admits only $n$ pairs of disjoint compression disks on distinct sides and $g>2$. We show $V\cup_S W$ admits an untelescoping:
$(V_1\cup_{S_1}W_1)\cup_F(W_2\cup_{S_2}V_2)$ such that $W_i$ has only one separating compressing disk and $d(S_i)\geq 2$, for $i=1,~2$. If $n>1$, at least one of $d(S_i)$ is 2 and $S$ is a critical Heegaard surface.
\end{abstract}
\maketitle

\section{Introduction}
Let $M$ be a connected, orientable, closed 3-manifold. Let $V\cup_S W$ be a Heegaard splitting of $M$, thus $V$ and $W$ are handlebodies which share the common boundary suface $S$. $V\cup_S W$ is said to be weakly reducible if there are two essential disks $D\subset V$ and $E\subset W$, such that $ D\cap E=\emptyset$.  A Heegaard splitting which is not weakly reducible is said to be strongly irreducible. Casson and Gorden proved that if a 3-manifold admits an irreducible but weakly reducible Heegaard splitting,  it must contain an incompressible surface\cite{CG}. Schlarmann and Thompson prove that if a Heegaard splitting is weakly reducible, then it admits an untelescoping by reattaching handles\cite{ST}.

In this paper, the authors are interested in a weakly reducible Heegaard splitting which admits finite many pairs of disjoint compression disks on distinct sides up to isotopy. The first author proved that for each $g>2$, there are infinitely many genus g Heegaard splittings which admits a unique pair of disjoint compression disks on distinct sides\cite{EQ}. Such a Heegaard splitting is said to be keen weakly reducible. Keen weakly reducible Heegaard surfaces are interesting because they are examples of topologically non-minimal surfaces which are introduced by David Bachman\cite{GT}.

In Section 3, we will show if $V\cup_S W$ admits an untelescoping $(V_1\cup_{S_1}W_1)\cup_F(W_2\cup_{S_2}V_2)$, such that $W_i$ has only one separating compression disk and $d(S_i)\geq 3$, for $i=1,~2$, then the amalgamated Heegaard surface $S$ is keen weakly reducible. This improves our previous result in \cite{EQ}, which is under the similar assumption but $d(S_i)\geq 4$ is required.

Although we could not answer whether there exists a weakly reducible Heegaard splitting which admits more than one but finite many pairs of disjoint compression disks on distinct sides, we will give some necessary conditions to describe properties of such Heegaard splittings in Section 4.

\section{Preliminaries}

Let $S$ be a closed orientable surface whose genus is at least 2. The distance between two essential simple closed curves
$\alpha$ and $\beta$ in $S$, denoted by $d_S(\alpha, \beta)$, is the
smallest integer $n \geq 0$ such that there is a sequence of
essential simple closed curves $\alpha=\alpha_{0},
\alpha_{1},...,\alpha_{n}=\beta$ in $S$ where $\alpha_{i-1}$ is
disjoint from $\alpha_{i}$ for $1\leq i\leq n$. If $S$ is an embedded surface in a 3-manifold, and $D$ and $E$ are two compression disks on distinct sides, sometimes $d_S(\partial D,\partial E)$ is denoted simply by $d_S(D, E)$. Let $A$ and $B$ be two sets of essential simple closed curves in $S$. The distance between $A$ and $B$, which is denoted by $d_S(A,B)$, is defined to be
$min\{d_S(x,y)|x\in A, y\in B\}$.  The Heegaard distance of a Heegaard splitting $V\cup_{S}W$ is defined to be $d(S)=d_S(\mathscr{D}_V,\mathscr{D}_W)$ where $\mathscr{D}_V$ and $\mathscr{D}_W$ are sets of essential disks in $V$ and $W$, respectively. $V\cup_{S}W$ is weakly reducible if and only if $d(S)\leq 1$.
The Heegaard distance was first defined by Hempel, see \cite{JH}.

\begin{lem}\label{5}
Let $S$ be a closed orientable surface. Suppose that $\beta$ and $\gamma$ are two essential simple closed curves on $S$. If $d_S(\beta, \gamma)\geq 3$ and $\beta$ is separating on $S$, then there are at least two essential sub-arcs of $\gamma$ on each component of $S\setminus\beta$.
\end{lem}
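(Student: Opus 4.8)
The plan is to argue by contradiction, exploiting the reformulation of the distance hypothesis: $d_S(\beta,\gamma)\ge 3$ means precisely that there is \emph{no} essential simple closed curve on $S$ disjoint from both $\beta$ and $\gamma$, since any such curve would furnish a length-two edge-path from $\beta$ to $\gamma$. I would first isotope $\gamma$ so that it meets $\beta$ minimally. Since $\beta$ is separating, write $S\setminus\beta=S_1\cup S_2$; then each component of $\gamma\cap S_i$ is an arc with endpoints on $\beta$. An outermost boundary-parallel arc of $\gamma$ in $S_i$ would cut off a bigon between $\gamma$ and $\beta$, contradicting minimality; hence in minimal position \emph{every} arc of $\gamma\cap S_i$ is essential. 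Thus it suffices to show that each $S_i$ meets $\gamma$ in at least two arcs.

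Suppose not, and say $S_1$ contains at most one arc of $\gamma$. If it contains none, then $\gamma\cap\beta=\emptyset$ (each intersection point is an endpoint of one arc on each side), so $\beta$ and $\gamma$ are disjoint and $d_S(\beta,\gamma)\le 1$, a contradiction. Hence $S_1$ meets $\gamma$ in exactly one arc $a$; counting endpoints then forces $\abs{\gamma\cap\beta}=2$, with a single arc on each side.

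The heart of the argument is a regular-neighborhood construction. The graph $\beta\cup a$ is a theta-graph, so its regular neighborhood $P=N(\beta\cup a)$ in $S$ is a pair of pants. Its three boundary curves are one curve $\partial_3$ parallel to $\beta$ (lying on the $S_2$ side), and two curves $\delta_1,\delta_2$ on the $S_1$ side, each running parallel to $a$ together with one of the two subarcs of $\beta$. Pushing $\delta_1,\delta_2$ slightly into $S_1$ makes them disjoint from $\beta$, and since $\gamma\cap S_1=a$ and they are parallel to $a$, disjoint from $\gamma$ as well. I then claim at least one of $\delta_1,\delta_2$ is essential: if both bounded disks in $S$, capping off $\delta_1$ and $\delta_2$ would convert $P$ into a disk with boundary $\partial_3$, forcing $\beta$ to bound a disk and contradicting that $\beta$ is essential. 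The resulting essential curve is disjoint from both $\beta$ and $\gamma$, contradicting $d_S(\beta,\gamma)\ge 3$. By symmetry the same reasoning applies to $S_2$, completing the argument.

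The step I expect to require the most care is the pair-of-pants analysis: correctly identifying the three boundary curves of $N(\beta\cup a)$, verifying that $\delta_1,\delta_2$ can be made simultaneously disjoint from $\beta$ and $\gamma$, and justifying the capping-off (including that the disks may be taken on the appropriate side) that produces an essential curve. The reduction to minimal position and the outermost-bigon observation are routine, but they are exactly what guarantee that ``at most one essential arc'' really means ``at most one arc,'' which is what makes the endpoint count go through.
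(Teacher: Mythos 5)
Your proof is correct and follows essentially the same route as the paper: put $\beta$ and $\gamma$ in minimal position, reduce to the case $|\beta\cap\gamma|=2$, and contradict $d_S(\beta,\gamma)\geq 3$ by exhibiting an essential boundary curve of a regular neighborhood that is disjoint from both curves. Your pair of pants $N(\beta\cup a)$ is just the portion of the paper's neighborhood $N(\beta\cup\gamma)$ relevant to one side of $\beta$ (your $\delta_1,\delta_2$ are among its boundary components); you simply supply details---the bigon criterion for essentiality of the arcs and the capping-off argument showing some boundary curve is essential---that the paper asserts without proof.
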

\begin{proof}
We assume that $|\beta\cap\gamma|$ is minimal in the isotopy classes of $\beta$ and $\gamma$. Since $\beta$ is separating on $S$, $|\beta\cap\gamma|$ is even. If $|\beta\cap\gamma|=0$ then $d_S(\beta, \gamma)\leq 1$. If $|\beta\cap\gamma|=2$ then one boundary component of the closure of $N(\beta\cup\gamma)$ is essential on $S$ and disjoint from $\beta$ and $\gamma$ and we have $d_S(\beta, \gamma)=2$. Therefore  $|\beta\cap\gamma|\geq 4$ and there are at least two essential sub-arcs of $\gamma$ on each component of $S\setminus\beta$.
\end{proof}

Let $V_1$ be a handlebody whose genus is at least 2 and $\partial V_1=S_1$. Let $V$ be the handlebody obtained by attaching one 1-handle $D_0\times I$ to $V_1$ along a pair of disjoint disks $D_1, D_2\subset S_1$, where $D_0$ is the disk corresponding to the 1-handle. $\mathscr{D}_V$ is defined to be the set of compression disks of $V$. There is a partition of $\mathscr{D}_V$: $\mathscr{D}_V=\mathscr{D}_0\cup\mathscr{D}_1\cup\mathscr{D}_2\cup\mathscr{D}_3$, such that, $\mathscr{D}_0=\{D_0\}$, $\mathscr{D}_1=\{D~| D\cap D_0=\emptyset;~ D\neq D_0, D$ is inessential in $V_1\}$, $\mathscr{D}_2=\{D~| D\cap D_0=\emptyset;~ D$ is essential in $V_1\}$ and $\mathscr{D}_3=\{D~| D\cap D_0\neq\emptyset\}$.  It is clear that any compression disk $D$ of $V$ belongs to one and only one of the four subsets up to isotopy.  If $D\in \mathscr{D}_1$, then  $\partial D,~\partial D_1$ and $\partial D_2$ co-bound a pair of pants on $S_1$. Hence $D$ is isotopic to a band-sum of $D_1, D_2$ along an arc and $\partial D$ bounds a once-punctured torus. $D\in \mathscr{D}_2$ if and only if $D\in \mathscr{D}_{V_1}$.

Now we consider $D\in \mathscr{D}_3$, that is, $D$ is an essential disk in $V$ such that $D\cap D_0\neq\emptyset$.  Furthermore, $D$ is isotoped in $V$ such that $|D\cap D_0|$ is minimal. Let $S_1^*$ be the surface  $S_1-(int(D_1)\cup int(D_2))$. Then $S_1^*$ is a sub-surface of $S_1$ with
two boundary components $\partial D_1$ and $\partial D_2$. By standard arguments, we have some observations as follows.

\begin{lem}\label{1}\cite{ZY}
\mbox\par
\begin{enumerate} [(1)]
\item   Each component of $D\cap D_0$ is a properly embedded arc in both $D$ and $D_0$.
\item   Each component of $\partial D\cap S_1^*$ is essential on $S_1^*$.
\item   Each component of $D\cap(\partial D_0\times  I)$ is an arc with its two end points lying in distinct boundary components of the annulus $\partial D_0\times  I$.
\end{enumerate}
\end{lem}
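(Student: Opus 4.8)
The plan is to derive all three statements from a single normal-form hypothesis and then run the usual innermost-disk and outermost-arc surgeries, using that the handlebody $V$ is irreducible. Write $A=\partial D_0\times I$ for the lateral annulus of the $1$-handle, so that $\partial A=\partial D_1\cup\partial D_2$ and $S=S_1^*\cup_{\partial D_1\cup\partial D_2}A$. Since $A\subset S=\partial V$ and $D$ is properly embedded with $\partial D\subset S$, we have $D\cap A=\partial D\cap A$. I would fix $D$ so that $|D\cap D_0|$ is minimal and, among all such positions, so that $|\partial D\cap(\partial D_1\cup\partial D_2)|$ is also minimal; the two-step minimization is genuinely needed, because an arc of $\partial D\cap A$ that stays near $\partial D_1$ without meeting the core $\partial D_0$ does not affect $|D\cap D_0|$ at all. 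Note first that $D\in\mathscr{D}_3$ forces $D\cap D_0\neq\emptyset$, so $\partial D$ meets the core of $A$ and therefore crosses $\partial A$; since $\partial D$ is a single circle that is not contained in $A$ (else $\partial D$ would bound in $S$ or be isotopic to $\partial D_0$, contradicting that $D$ is essential and meets $D_0$) and not contained in $S_1^*$ (else $D$ could be isotoped off $D_0$), both $\partial D\cap A$ and $\partial D\cap S_1^*$ are automatically unions of arcs. This reduces (2) and (3) to essentiality statements.

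For (1), put $D$ transverse to $D_0$, so $D\cap D_0$ is a disjoint union of arcs and circles. Suppose a circle occurs and choose one, $c$, innermost on $D_0$; it bounds a disk $\Delta\subset D_0$ with $\mathrm{int}\,\Delta\cap D=\emptyset$. As $c\subset D$ and $D$ is a disk, $c$ also bounds a disk $\Delta'\subset D$, and $\Delta\cup\Delta'$ is a $2$-sphere in $V$. Because $V$ is a handlebody it is irreducible, so this sphere bounds a ball, yielding an isotopy of $D$ that carries $\Delta'$ across $\Delta$ and removes $c$. This strictly lowers $|D\cap D_0|$, a contradiction, so no circles occur. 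The endpoints of the surviving arcs lie on $\partial D\cap\partial D_0\subset S$, whence each is properly embedded in both $D$ and $D_0$.

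For (3), by the paragraph above it remains only to rule out an arc of $\partial D\cap A$ having both endpoints on the same component of $\partial A$. If some arc did, choose one, $a$, outermost in $A$, so that $a$ together with a subarc $b\subset\partial D_1$ (say) bounds a disk $\delta\subset A$ with $\mathrm{int}\,\delta\cap\partial D=\emptyset$. Isotoping $\partial D$ across $\delta$ on $S$, and extending through a collar to an ambient isotopy of $D$ in $V$, removes either the two crossings of $a$ with the core $\partial D_0$ (lowering $|D\cap D_0|$) or, when $a$ is disjoint from the core, the two endpoints $\partial a\subset\partial D_1$ (lowering $|\partial D\cap(\partial D_1\cup\partial D_2)|$); either way we contradict minimality. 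Hence every arc of $\partial D\cap A$ joins $\partial D_1$ to $\partial D_2$.

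Statement (2) is the analogous claim on $S_1^*$: an inessential (boundary-parallel) arc $\gamma$ of $\partial D\cap S_1^*$ cobounds a disk $E\subset S_1^*$ with a subarc of some $\partial D_i$, and taking $\gamma$ outermost makes $\mathrm{int}\,E$ disjoint from $\partial D$; the disk $E$ then guides an isotopy of $\partial D$ removing $\partial\gamma$ from $\partial D_i$, which lowers $|\partial D\cap(\partial D_1\cup\partial D_2)|$ and contradicts the secondary minimality. I expect the main obstacle to be bookkeeping rather than conceptual: one must verify that the surgeries in (2) and (3) never reintroduce circles of $D\cap D_0$ or recreate inessential arcs elsewhere, and must check in each case that a reduction of $|\partial D\cap\partial D_0|$ actually forces a reduction of the arc count $|D\cap D_0|$ once (1) is known. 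The one essentially topological input, as opposed to combinatorial, is the irreducibility of $V$ used in (1); the remaining moves are boundary isotopies extended through a collar of $S$.
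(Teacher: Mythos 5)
Your argument is correct, but there is nothing in the paper to compare it against: the paper gives no proof of this lemma at all, quoting it from \cite{ZY} with the words ``by standard arguments.'' Your write-up is precisely that standard argument (innermost circles plus irreducibility for (1), outermost-arc boundary isotopies for (2) and (3)), so in spirit you have done what the citation stands for. The one place you genuinely go beyond the paper is the two-step lexicographic minimization, and you are right that it is not optional: with only $|D\cap D_0|$ minimal, as the paper literally assumes, a small finger of $\partial D$ pushed across $\partial D_1$ and back creates an inessential arc of $\partial D\cap S_1^*$ (or an arc of $\partial D\cap(\partial D_0\times I)$ with both endpoints on $\partial D_1$) without changing $|D\cap D_0|$; so the secondary minimization of $|\partial D\cap(\partial D_1\cup\partial D_2)|$ is a necessary repair of the hypothesis, not bookkeeping. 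Three points would tighten the proof. First, in (1), pushing $\Delta'$ across the ball is delicate if other sheets of $D$ or $D_0$ lie inside it; the clean version is the disk swap $(D\setminus\Delta')\cup\Delta$, which is embedded because $\mathrm{int}\,\Delta\cap D=\emptyset$, meets $D_0$ in fewer components, and is isotopic to $D$ rel boundary by irreducibility. Second, in (3) an outermost arc $a$ with both endpoints on $\partial D_1$ can cross the core any even number of times, not exactly twice; the argument is unaffected. Third, the circle-reintroduction issue you flag is real (a cap in the collar trace can close an arc of $D\cap D_0$ into a circle), but it is settled by one further application of your step (1): remove all circles, after which the number of components equals $|\partial D\cap\partial D_0|/2$, which has strictly decreased, contradicting minimality. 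With those three points made explicit, your proof is complete.
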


 Let $\gamma$ be an outermost component of $D\cap(D_1\cup D_2)$ on $D$. This means that $\gamma$, together with an arc $\gamma_1\subset\partial D$, bounds a sub-disk in $D$, say $D_\gamma$, such that $D_\gamma\cap (D_1\cup D_2)=\gamma$. We call $\gamma_1$ \emph{an outermost arc related to $\gamma$} and call $D_\gamma$ \emph{an outermost disk related to $\gamma$}. See Figure \ref{d3}.

\begin{lem}\label{2}\cite{ZY}

\mbox\par

\begin{enumerate} [(1)]
\item $\gamma_1$,  whose end points lie in one of $D_1$ and $D_2$, is strongly essential in $S_1^*$.
\item $D_\gamma\in\mathscr{D}_2$, that is, $D_\gamma$ is an essential disk both in $V_1$ and $V$.
\end{enumerate}

\end{lem}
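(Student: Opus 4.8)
The plan is to prove both parts at once: once $D_\gamma$ is known to lie in $V_1$, assertion (1) (that $\gamma_1$ is strongly essential in $S_1^*$) and assertion (2) (that $D_\gamma$ is essential in $V_1$) are two readings of the single statement that $\partial D_\gamma$ is essential on $S_1$. Throughout I keep $|D\cap D_0|$ minimal and, as in Lemma \ref{1}, assume $D\cap(D_1\cup D_2)$ consists of arcs. First I would pin down the position of $\gamma$: as a component of $D\cap(D_1\cup D_2)$ it lies in exactly one of the disjoint disks $D_1,D_2$, say $\gamma\subset D_1$, so its endpoints $p,q$ lie on $\partial D_1$ and the outermost arc $\gamma_1\subset\partial D$ has $\partial\gamma_1=\{p,q\}\subset\partial D_1$; this is the parenthetical claim in (1). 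Since $D_\gamma\cap(D_1\cup D_2)=\gamma\subset\partial D_\gamma$, the interior of $D_\gamma$ misses $D_1\cup D_2$, so $D_\gamma$ lies on one side of $D_1$, either the $V_1$-side or the handle side $D_0\times(0,1]$.

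The key structural step is to rule out the handle side, and Lemma \ref{1}(3) does exactly this. If $D_\gamma\subset D_0\times(0,1]$, then $D_\gamma$ cannot return to $V_1$ without recrossing $D_1$ or crossing $D_2$, so its boundary arc $\gamma_1$ is trapped in the annulus $\partial D_0\times I$; as $\gamma_1$ meets $D_1\cup D_2$ only in its endpoints, it is then a single component of $D\cap(\partial D_0\times I)$ having both endpoints on $\partial D_0\times\{0\}=\partial D_1$, contradicting Lemma \ref{1}(3) (each such arc joins the two distinct boundary circles of the annulus). The case $\gamma\subset D_2$ is symmetric. Hence $D_\gamma\subset V_1$, which yields at once $D_\gamma\cap D_0=\emptyset$ (as $D_0$ lies in the handle) and $\gamma_1\subset S_1^*$; by Lemma \ref{1}(2) the arc $\gamma_1$ is essential in $S_1^*$, and $D_\gamma$ is a properly embedded disk in $V_1$ with $\partial D_\gamma=\gamma\cup\gamma_1\subset S_1$.

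The remaining and hardest step is to promote ``essential'' to the strong essentiality in (1) and, equivalently, to show $\partial D_\gamma$ does not bound a disk on $S_1$ (so that $D_\gamma\in\mathscr{D}_2$, giving (2)). I would argue by contradiction, supposing $\partial D_\gamma$ bounds a disk $E\subset S_1$. Isotoping $\gamma$ across $D_1$ to an arc $\delta\subset\partial D_1$ identifies $\partial D_\gamma$ with $\gamma_1\cup\delta$, and $E$ must contain the subdisk of $D_1$ cut off by $\gamma$; then either (a) $E\cap S_1^*$ is a disk bounded by $\gamma_1\cup\delta$, or (b) $E$ engulfs the second foot $D_2$ and $E\cap S_1^*$ is an annulus between $\gamma_1\cup\delta$ and $\partial D_2$. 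Ruling out (a) is routine, since it would make $\gamma_1$ cut off a disk in $S_1^*$, contradicting Lemma \ref{1}(2). Case (b) is the delicate one and is where I expect the real work: here $\gamma_1$ is parallel in $S_1^*$ to $\partial D_2$, and I would push the subdisk $D_\gamma$ of $D$ across the ball $B\subset V_1$ bounded by $D_\gamma\cup E$ and on through the foot $D_2$. Because any strand of $D$ running from $D_1$ through the handle to $D_2$ must cross the cocore $D_0$ in between, this isotopy unthreads one such strand and deletes at least one arc of $D\cap D_0$, contradicting minimality. The careful point in (b)---checking that the move genuinely lowers $|D\cap D_0|$ rather than merely trading intersections back and forth---is the crux, and eliminating (b) gives both the strong essentiality of $\gamma_1$ and $D_\gamma\in\mathscr{D}_2$ simultaneously.
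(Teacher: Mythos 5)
Your opening two steps are correct and well justified: $\gamma$ lies in one of $D_1,D_2$, and if $D_\gamma$ sat in the handle $D_0\times I$ then $\gamma_1$ would be a component of $D\cap(\partial D_0\times I)$ with both endpoints on the same boundary circle, contradicting Lemma \ref{1}(3); hence $D_\gamma\subset V_1$, and Lemma \ref{1}(2) kills your case (a). (For what it is worth, the paper offers no proof to compare against --- it cites \cite{ZY} --- so your argument must stand on its own.) The gap is exactly where you say it is: case (b). You assert that pushing $D_\gamma$ across the ball $B$ and through the foot $D_2$ ``deletes at least one arc of $D\cap D_0$,'' but this is the entire content of the lemma and nothing in your write-up establishes it. The obstruction is concrete: $B$ is never disjoint from the rest of $D$. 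In case (b) we have $\partial B=D_\gamma\cup(D_1'\cup A\cup D_2)$, where $D_1'\subset D_1$ is the half-disk cut off by $\gamma$ and $A\subset S_1^*$ is the annulus; since $B$ contains the collar of the interior of $D_1'\cup A\cup D_2$ in $V_1$, \emph{every} component of $D\cap V_1$ whose boundary meets $D_2$ lies inside $B$ --- in particular the component attached to the $D_2$-end of every strand of $D$ running through the handle. Your isotopy therefore drags all of these components, together with the half-strands in $D_0\times[1/2,1]$, through $D_0$ along with $D_\gamma$, and whether $|D\cap D_0|$ actually decreases (rather than being rearranged) is precisely the open question. A proof that flags its own decisive step as unverified is not a proof.

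The crux can in fact be closed without any isotopy, by a global count that your local ``unthreading'' picture misses. By Lemma \ref{1}(3) the boundary of each component of $D\cap(D_0\times I)$ alternates between arcs in $D_1$ and arcs in $D_2$, so $|D\cap D_1|=|D\cap D_2|=m\geq 1$. In case (b), any arc of $\partial D\cap S_1^*$ with an endpoint on $\partial D_2$ must lie entirely in $A$: its interior can cross neither $\partial S_1^*$ nor $\gamma_1\subset\partial D$. If both endpoints of such an arc lay on $\partial D_2$, it would cut a disk off the annulus $A$ and hence be inessential in $S_1^*$, contradicting Lemma \ref{1}(2); so each of the $2m$ points of $\partial D\cap\partial D_2$ begins an arc crossing $A$ and ending in the interior of $\delta\subset\partial D_1$ (the arc of $\partial D_1$ with $\partial D_1'=\gamma\cup\delta$). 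This produces $2m$ distinct points of $\partial D$ in the interior of $\delta$, while $\partial D$ meets $\partial D_1$ in only $2m$ points, two of which are the endpoints of $\gamma$ and lie off the interior of $\delta$ --- a contradiction. Some argument of this global kind (or an induction over the pieces of $D$ inside $B$) is what case (b) requires; as submitted, your proof of the strong essentiality in (1) and the essentiality in (2) is incomplete at exactly the step that distinguishes Lemma \ref{2} from Lemma \ref{1}.
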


We explain that an essential arc in $S_1^*$ is called \emph{strongly essential} if both boundary points lie in $\partial D_i$ and it is an essential arc on $S_1\cup D_j$ , where $\{i, j\} =\{1, 2\}$.
\begin{figure}
\centering
    \includegraphics[width=5cm]{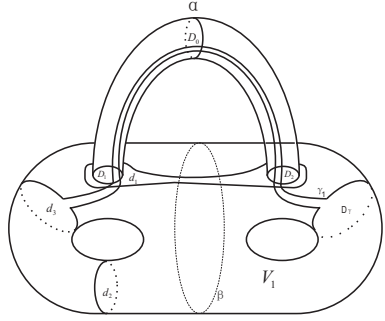}\\
  \caption{disks in the handlebody such that $d_i\in \mathscr{D}_i$}\label{d3}
\end{figure}

\section{keen weakly reducible Heegaard splittings}
\begin{thm}\label{6}
Let $V\cup_S W$ be a Heegaard splitting of a closed 3-manifold $M$. Suppose it admits an untelescoping:
$$M=V\cup_S W=(V_1\cup_{S_1}W_1)\cup_F(W_2\cup_{S_2}V_2),$$  such that $W_i$ has only one separating compressing disk and $d(S_i)\geq 3$, for $i=1,~2$,
then  $S$ is keen weakly reducible.
\end{thm}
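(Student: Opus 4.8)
The plan is to produce one explicit disjoint pair of compressing disks arising from the untelescoping and then to show that every disjoint pair on distinct sides is isotopic to it; uniqueness, not existence, is the whole point. Since the thin surface $F$ is nonempty, the untelescoping is itself a weak reduction of $S$, so $V\cup_S W$ is weakly reducible and there is a canonical pair $(D_0,E_0)$ with $D_0\subset V$ and $E_0\subset W$ essential and disjoint; here $E_0$ is the image in $W$ of the unique separating compressing disk of the $W_i$, and $D_0$ is a meridian of the corresponding amalgamation tube on the $V$ side. The first step is therefore bookkeeping: describe the amalgamation explicitly, identify $(D_0,E_0)$, and record that $V$ (respectively $W$) is obtained from the handlebodies $V_1,V_2$ (respectively from the compression bodies $W_1,W_2$) by attaching the amalgamation $1$-handles. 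This puts us in exactly the situation of the preliminaries, so that every essential disk of $V$ (and of $W$) is classified up to isotopy into the four families $\mathscr{D}_0,\mathscr{D}_1,\mathscr{D}_2,\mathscr{D}_3$ of Lemma \ref{1}, and the separating disk $E_0$ is the distinguished band-sum disk of type $\mathscr{D}_1$.

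For uniqueness I would take an arbitrary disjoint pair $(D,E)$ with $D\subset V$, $E\subset W$, isotoped so that $\partial D$ and $\partial E$ meet the cocores of the amalgamation tubes minimally, and then project the analysis onto the thick surfaces $S_1$ and $S_2$. The key structural observation is that a disk which survives as an essential disk of some $W_i$ (type $\mathscr{D}_2$) cannot be disjoint from an essential disk of the corresponding $V_i$: such a configuration would exhibit compressing disks on opposite sides of the strongly irreducible surface $S_i$ with disjoint boundaries, forcing $d(S_i)\le 1$ and contradicting $d(S_i)\ge 3$. Hence both $D$ and $E$ must be forced into the separating families, and one shows by an outermost-disk argument (Lemma \ref{2}, the outermost disk is essential in $V_1$ and its outermost arc is strongly essential) that $E$ is isotopic to $E_0$, and then, cutting along $E_0$ and repeating on the complementary handlebody, that $D$ is isotopic to $D_0$.

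The main obstacle is ruling out disks of type $\mathscr{D}_3$, namely disks whose boundary genuinely crosses the amalgamation tubes; these are the configurations that a naive distance count does not immediately eliminate. Here is where Lemma \ref{5} does the decisive work: because $\partial E_0$ is separating on $S$ and any competing essential curve lies at distance at least $3$ from the relevant disk set on $S_i$, Lemma \ref{5} guarantees at least two essential sub-arcs of that curve on each side of $\partial E_0$. Feeding these two guaranteed essential sub-arcs into the outermost-disk machinery of Lemma \ref{2} produces, for any genuinely tube-crossing disk, an essential disk in $V_i$ whose boundary is disjoint from a disk on the opposite side of $S_i$, again violating $d(S_i)\ge 3$. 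This two-sub-arc mechanism is exactly the point at which the hypothesis can be relaxed from $d(S_i)\ge 4$ to $d(S_i)\ge 3$: the earlier argument spent one extra unit of distance to force the arcs to be essential, whereas Lemma \ref{5} secures essentiality already at distance $3$. Assembling these steps shows $(D,E)$ is isotopic to $(D_0,E_0)$, so $S$ is keen weakly reducible.
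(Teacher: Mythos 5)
Your overall skeleton matches the paper's in outline (exhibit the canonical pair $(D_0,E_0)$, classify all other disks via the four families $\mathscr{D}_0,\dots,\mathscr{D}_3$, use outermost-disk arguments plus Lemma \ref{5} to handle tube-crossing disks, and contradict the distance hypotheses), but the engine you propose for the crucial cases is invalid, so the proof has a genuine gap.

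Your ``key structural observation'' --- that a disk $D\in\mathscr{D}_2$ (essential in $V_1$) disjoint from a disk $E\in\mathscr{D}^2$ (essential in $V_2$) would exhibit disjoint compressing disks on opposite sides of a strongly irreducible $S_i$, forcing $d(S_i)\le 1$ --- is false as stated. The disks $D$ and $E$ compress \emph{different} surfaces: $\partial D\subset S_1$ and $\partial E\subset S_2$. The side of $S_1$ opposite to $V_1$ is the compression body $W_1$, whose only compressing disk is $E_0$; the disk $E$ lies in $V_2$, beyond the thin surface $F$, and is not a compressing disk of $S_1$ at all (its boundary does not even lie on $S_1$). Symmetrically, $D$ is not a compressing disk of $S_2$. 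So disjointness of $D$ and $E$ produces no pair of disks on opposite sides of a single $S_i$, and strong irreducibility of the $S_i$ yields no immediate contradiction. What the paper actually does here is a two-step distance chain through an \emph{intermediate curve}: since $d_{S_1}(\partial D,\beta)\ge 3$, the curve $\partial D$ must cross the annulus $\beta\times I$, so it has a sub-arc $\gamma_1$ with endpoints on $\partial E_1$ lying in the portion of $S_1$ shared with $S_2$; capping $\gamma_1$ with an arc in $E_1$ gives an essential curve $\gamma$ on $S_2$ disjoint from both $\partial D_0$ and $\partial E$, whence $d_{S_2}(f(\alpha),\mathscr{D}_{V_2})\le d(D_0,\gamma)+d(\gamma,E)\le 2$, contradicting the hypothesis. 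This curve-construction step (and its refinement using Lemma \ref{5} to get sub-arcs on the \emph{correct} side of $\beta$ when $D$ or $E$ also crosses its own tube) is the real content of Cases 3 and 4 of the paper, and it is absent from your proposal.

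Two further points. First, several misidentifications feed the confusion above: $E_0$ is the cocore of the handle $E_0\times I$, i.e.\ the distinguished family is $\mathscr{D}^0=\{E_0\}$, not the band-sum family $\mathscr{D}^1$ (band-sums of $E_1,E_2$ bound once-punctured tori and are inessential in $V_2$); and $\partial E_0=\beta$ is separating on $S_1$ but \emph{non-separating} on the Heegaard surface $S$, so Lemma \ref{5} must be applied on $S_1$ or $S_2$, never on $S$. Second, your plan never treats the band-sum families at all: e.g.\ $D=D_0$ against a band-sum $E\in\mathscr{D}^1$ (handled in the paper because $\alpha$ separates $E_1$ from $E_2$ on $S_2$, so any band must cross $\alpha$), or $D\in\mathscr{D}_1$ against $E\in\mathscr{D}^1$ (handled via the once-punctured tori $T_D$, $T_E$ and the fact that $\alpha$ and $\beta$ are not isotopic). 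The phrase ``both $D$ and $E$ must be forced into the separating families'' is not an argument and, given the misidentification of those families, does not correspond to any step that could be carried out.
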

\begin{proof}  The untelescoping of $V\cup_S W$ could be realized by reattaching handles as follows:

$V_i$ is a genus $g>1$ handlebody and  $S_i=\partial V_i$, for $i=1,~2$. There exists an essential separating curve  $\beta\subset S_1$, such that $d_{S_1}(\beta, \mathscr{D}_{V_1})\geq 3$ and we attach a 2-handle $E_0\times I$ to $V_1$ along $\beta\times I$ so that $\beta$ bounds the unique disk $E_0\subset W_1$. Let $M_1 =V_1\cup_{\beta\times I}(E_0\times I)=V_1\cup_{S_1}W_1$ and $F=\partial M_1$ which has two components.

Next, we attach a 1-handle $D_0\times I$ to $M_1$ such that $(D_0\times I)\cap(E_0\times I)=\emptyset$ and the gluing disks denoted by $D_1\cup D_2=V_1\cap (D_0\times I)$ are on distinct components of $F$. $D_0$, corresponding to the 1-handle, is the unique disk in $W_2$ and we denote $\partial D_0$ by $\alpha$. Let $M_2=M_1\cup_{D_1\cup D_2}(D_0\times I)=(V_1\cup_{S_1}W_1)\cup_F W_2$ be the resulted 3-manifold.

At last, we attach $V_2$ to $M_2$ via some orientation preserving homeomorphism $f:\partial M_2\rightarrow S_2$ such that
$d_{S_2}(f(\alpha), \mathscr{D}_{V_2})\geq 3$. Since $\alpha$ is separating on $\partial M_2$, $f(\alpha)$ is separating on $S_2$ and after the handlebody-attaching, we will use $\alpha$ instead of $f(\alpha)$. $V_2\cap (E_0\times I)$ are two disks, denoted by $E_1$,$E_2$. The result is the closed 3-manifold $M$, that is
 $$M=(V_1\cup_{S_1}W_1)\cup_F(W_2\cup_{S_2}V_2)=V_1\cup_{\beta\times I}(E_0\times I)\cup_{D_1\cup D_2}(D_0\times I)\cup_{S_2}V_2.$$

Now let $V=V_1\cup_{D_1\cup D_2} (D_0\times I)$, $W=V_2\cup_{E_1\cup E_2} (E_0\times I)$ and $S=V\cap W$. Then both $V$ and $W$ are genus $g+1$ handlebodies and $M=V\cup_SW$ is a Heegaard splitting.  $\alpha$ bounds the disk $D_0$ in $V$ and $\beta$ bounds the disk $E_0$ in $W$. $D_0\cap E_0=\emptyset$ implies that $M=V\cup_SW$ is weakly reducible.

In order to prove $(D_0, E_0)$ is the unique pair of disjoint compression disks of $S$, we divide $\mathscr{D}_V$ and $\mathscr{D}_W$ as mentioned in Section 2: $\mathscr{D}_V=\mathscr{D}_0\cup\mathscr{D}_1\cup\mathscr{D}_2\cup\mathscr{D}_3,$ such that $\mathscr{D}_0=\{D_0\}$. $\mathscr{D}_1=\{D~| D\cap D_0=\emptyset;~ D\neq D_0; D$ is inessential in $V_1\}$. $\mathscr{D}_2=\{D~| D\cap D_0=\emptyset;~ D$ is essential in $V_1\}= \mathscr{D}_{V_1}$. $\mathscr{D}_3=\{D~| D\cap D_0\neq\emptyset\}$. Similarily, $\mathscr{D}_W=\mathscr{D}^0\cup\mathscr{D}^1\cup\mathscr{D}^2\cup\mathscr{D}^3$, such that  $\mathscr{D}^0=\{E_0\}$. $\mathscr{D}^1=\{E~| E\cap E_0=\emptyset;~ E\neq E_0; E$ is inessential in $V_2\}$. $\mathscr{D}^2=\{E~| E\cap E_0=\emptyset;~ E$ is essential in $V_2\}= \mathscr{D}_{V_2}$. $\mathscr{D}^3=\{E~| E\cap E_0\neq\emptyset\}$. We will show that for each $D\in\mathscr{D}_i$, $E\in\mathscr{D}^j$, where $i,j=0,1,2,3$ and $(i,j)\neq(0,0)$,  $D\cap E\neq\emptyset$ holds. Notice that if it holds for some $(i,j)$, then by the symmetric construction of the Heegaard splitting and the similar partitions of the $\mathscr{D}_V$ and $\mathscr{D}_W$, it also holds for $(j,i)$. Thus we just need to prove under the case $i\leq j$.

In the following argument, we suppose to the contrary that $D\cap E=\emptyset$ and assume that $|D\cap D_0|+|E\cap E_0|$ is minimal in the isotopy classes of $D$ and $E$.

\newtheorem{clm}{Case}

\begin{clm}\label{01}
$D\in\mathscr{D}_0$.
\end{clm}

  In this case $D=D_0$ .
  If $E\in\mathscr{D}^1$, $E$ is a band-sum of $E_1$ and $E_2$ along an arc on $S_2$. Since $\alpha$ separates $E_1$ and $E_2$ in $S_2$, any arc connected them must intersect $\alpha=\partial D_0$. It follows that $D\cap E\neq\emptyset$. If $E\in\mathscr{D}^2$, $E$ is an essential disk in $V_2$. Since
$d_{S_2}(D_0,\mathscr{D}_{V_2} )=d_{S_2}(f(\alpha),\mathscr{D}_{V_2} )\geq 3>2$, $D_0$ intersects each compression disk of $V_2$. It follows that $D\cap E\neq\emptyset$. If $E\in\mathscr{D}^3$, $E\cap E_0\neq\emptyset$. By applying Lemma \ref{2} to $W$ and $V_2$, there is an outermost disk of $E$, say $E_\gamma$, such that $E_\gamma\in \mathscr{D}^2$. By the above discussion, $D\cap E=D_0\cap E\supset D_0\cap E_\gamma\neq\emptyset$. Therefor when
$d(S_i)\geq 2$, any disk in $\mathscr{D}_W\backslash E_0$ intersects $D_0$, and any disk in $\mathscr{D}_V\backslash D_0$ intersects $E_0$.

\begin{clm}\label{11}
$D\in\mathscr{D}_1$.
\end{clm}

In this case, $D$ is a band-sum of $D_1$ and $D_2$ along an arc. We denote the arc by $c$.  $\beta$ separates $D_1$ and $D_2$ in $S_1$ implies that $\beta$ intersects $c$ as well as the band $N(c)$. Notice that $\partial D$ bounds a once-punctured torus $T_D=(\alpha\times I)\cup N(c)$ on $S$. Since $\alpha\cap\beta=\emptyset$, $\beta\cap T_D=\beta\cap N(c)$. If $\beta$ intersects $c$ only once, then $\overline{T_D\setminus\beta}$ is isotopic to $\alpha\times I$; Otherwise $\overline{T_D\setminus\beta}$ contains a component isotopic to $\alpha\times I$ and some disks.

Since $E\cap D=\emptyset$, $\partial E\subset T_D$ or $\partial E\subset (S\setminus T_D)$. If $\partial E\subset (S\setminus T_D)$ then $E\cap D_0=\emptyset$ which contradicts Case \ref{01}. Thus $\partial E\subset T_D$.

If $E\in\mathscr{D}^1$, $\partial E$ also bounds a once punctured torus $T_E$. $\partial E\subset T_D$ implies that $\partial E$ is isotopic to $\partial D$. It follows that $\alpha$ and $\beta$ are isotopic because $\alpha\subset T_D$, $ \beta\subset T_E$ and $\alpha\cap\beta=\emptyset$. But $\alpha$ and $\beta$ are not isotopic,  a contradiction.

If $E\in \mathscr{D}^2$, $E\cap E_0=\partial E\cap \beta=\emptyset$ and $\partial E$ is essential in $S$. It follows that $\partial E\subset \overline{T_D\setminus\beta}$ and $\partial E$ cannot lie in any disk component of $\overline{T_D\setminus\beta}$.  It follows that after isotopy, $\partial E\subset\alpha\times I \subset S$. In this case $\partial E$ is isotopic to $\alpha$ which implies that $d_{S_2}(f(\alpha),\mathscr{D}_{V_2} )=0$, a contradiction.

If $E\in\mathscr{D}^3$, $E\cap E_0\neq\emptyset$. By applying Lemma \ref{2} to $W$ and $V_{2}$, there is an outermost arc, say $\gamma$, whose two endpoints lie in $\partial E_i$, where $i=1$ or $2$. Furthermore, there is an outermost disk of $E$, say $E_{\gamma}$, such that $E_{\gamma}$ is essential in $V_{2}$. $E_{\gamma}\cap E_i$ is an arc denoted by $e_i$.
On the other hand, $D$ is a band-sum of $D_1$ and $D_2$ which lie distinct sides of $\beta$. Hence we may find a sub-arc $\gamma_1$ of $\partial D$ such that $\partial\gamma_1\subset\partial E_j$, where $j=3-i$.  $\gamma_1$, together with a sub-arc of $\partial E_j$, say $e_j$, bounds a disk isotopic to $D_0$. $D\cap E=\emptyset$ implies that $\gamma\cap\gamma_1=\emptyset$. Since $e_i\subset E_i$, $e_j\subset E_j$ and $E_i\cap E_j=\emptyset$, we have $e_i\cap e_j=\emptyset$.  Thus $(\gamma\cup e_i)\cap(\gamma_1\cup e_j)=\emptyset$ and it means that $E_{\gamma}\cap D_0=\emptyset$. This contradicts Case \ref{01}.

\begin{clm}\label{22}
$D\in\mathscr{D}_2$.
\end{clm}

In this case, $D$ is essential in $V_1$. If $E\in\mathscr{D}^2$, $E$ is essential in $V_2$.
Since $\partial D\cap\beta\neq\emptyset$ and $\beta$ is separating in $S_1$, there is a sub-arc $\gamma_1$ of $\partial D$ such that $\partial\gamma_1\subset\partial E_1$ and $\gamma_1$ together with an arc of $E_1$ forms an essential closed curve $\gamma$ on $S_2$.  Moreover, $D_0\cap D=\emptyset$ and $D_0\cap E_1=\emptyset$ mean that $D_0\cap\gamma=\emptyset$. $E\cap D=\emptyset$ and $E\cap E_1=\emptyset$ mean that $E\cap\gamma=\emptyset$.
Hence
 $d_{S_2}(f(\alpha),\mathscr{D}_{V_2})\leq d_{S_2}(D_0,E)\leq d_{S_2}(D_0,\gamma)+d_{S_2}(\gamma, E)=1+1=2$, a contradiction.

If $E\in\mathscr{D}^3$, $E\cap E_0\neq\emptyset$. By applying Lemma \ref{2} to $W$ and $V_2$, there is an outermost arc, say $\gamma_1$, whose two endpoints lie in $\partial E_i$, where $i=1$ or $2$. Furthermore, there is an outermost disk of $E$, say $E_{\gamma_1}$, such that $E_{\gamma_1}$ is essential in $V_2$. $E_{\gamma_1}\cap E_i$ is an arc denoted by $e_i$.
On the other hand, since $\partial D$ is essential in $S_1$, $d_{S_1}(\partial D,\beta)\geq 3$ and $\beta$ separates $S_1$, by Lemma \ref{5} there is a sub-arc $\gamma_2$ of $\partial D$ such that $\partial\gamma_2\subset\partial E_j$ where $j=3-i$ and $\gamma_2$ together with an arc of $e_j\subset E_j$ forms an essential closed curve $\gamma$ on $S_2$ such that $D_0\cap\gamma=\emptyset$. See Figure \ref{yb}. $D\cap E=\emptyset$ implies that $\gamma_1\cap\gamma_2=\emptyset$. Since $e_i\subset E_i$, $e_j\subset E_j$ and $E_i\cap E_j=\emptyset$, we have $e_i\cap e_j=\emptyset$.
Hence $E_{\gamma_1}\cap\gamma=\partial E_{\gamma_1}\cap\gamma=(\gamma_1\cup e_i)\cap(\gamma_2\cup e_j)=\emptyset$. Thus
$d_{S_2}(f(\alpha),\mathscr{D}_{V_2})\leq d_{S_2}(D_0,E_{\gamma_1}) \leq d_{S_2}(D_0,\gamma)+d_{S_2}(\gamma, E_{\gamma_1})=1+1=2$, a contradiction.

\begin{clm}
$D\in\mathscr{D}_3$.
\end{clm}

In this case, $D\cap D_0\neq\emptyset$. We only need to discuss when $E\in\mathscr{D}^3$, that is, $E\cap E_0\neq\emptyset$.  By Lemma \ref{2}, there exists an outermost arc $\gamma_1$ of $\partial D$ and an outermost disk of $D$ , say $D_{\gamma_1}$ which is essential in $V_{1}$.
Since $d_{S_1}(\beta,D_{\gamma_1})\geq d_{S_1}(\beta,\mathscr{D}_{V_1})\geq 3$ and $\beta$ separates $S_1$, by Lemma \ref{5}, there exist two sub-arcs of $\gamma_1$, say $\gamma_{11}$ and $\gamma_{12}$, such that
$\partial\gamma_{11}\subset\partial E_1$ and $\partial\gamma_{12}\subset\partial E_2$.  For each $i=1,2$, $\gamma_{1i}$ together with an arc $e_i\subset E_i$ forms an closed curve, say $\gamma_{e_i}$, which is essential in $S_2$.
Since the outermost arc of $D$ and $E_j$ are disjoint from $\alpha$, $D_0\cap\gamma_{e_{i}}=\emptyset$.

 By applying Lemma \ref{2} to $W$ and $V_2$, there is an outermost arc of $\partial E$, say $\gamma_2$, where $\partial \gamma_2\subset \partial E_j$, $j=1$ or $2$.  $\gamma_2$, together with an arc in $E_j$, bounds an outermost disk, say $E_{\gamma_2}$, which is essential in $S_2$. $D\cap E=\emptyset$ means that $\gamma_{11}\cup\gamma_{12}$ and $\gamma_2$ are disjoint. For each case  $\partial \gamma_2\subset \partial E_j$, $j=1,2$, $\gamma_{e_{3-j}}$ and $E_{\gamma_2}$ are disjoint. It follows that $d_{S_2}(f(\alpha),\mathscr{D}_{V_2})\leq d_{S_2}(D_0,E_{\gamma_2}) \leq d_{S_2}(D_0,\gamma_{e_{3-j}})+d_{S_2}(\gamma_{e_{3-j}}, E_{\gamma_2})\leq 2$ which is a contradiction.

Hence for each $D\in\mathscr{D}_i$, $E\in\mathscr{D}^j$, where $(i,j)\neq(0,0)$,  $D\cap E=\partial D\cap \partial E\neq\emptyset$ holds. It means that ($D_0$, $E_0$) is the unique disjoint pair of compression disks on distinct sides of $S$. This completes the proof.
\end{proof}

\section{ Weakly reducible Heegaard splittings with finite many pairs of disjoint compression disks}

In this section, we discuss a weakly reducible Heegaard splitting with more than one but finite many pairs of disjoint compression disks. It is uncertain whether there exists such a weakly reducible Heegaard splitting  but we will give some necessary conditions to describe some properties of such a Heegaard splitting if it does exist.

\begin{lem}\label{3}
Let $V\cup_S W$ be a Heegaard splitting of a closed 3-manifold $M$, where $g>2$. Suppose there are only $n$ pairs of disjoint compression disks
$$\{(D_i,E_i)|D_i\subset V, E_i\subset W,D_i\cap E_i=\emptyset, i=1,2,...,n\},$$
we have
\begin{enumerate} [(1)]
\item for each $i$, $\partial D_i$ and  $\partial E_i$ are not isotopic, moreover,  each is non-separating on $S$, and $\partial D_i\cup\partial E_i$  is separating on $S$.
\item If $D_i=D_j$ for some $i,j$, then $E_i\cap E_j\neq\emptyset$. Similarly, If $E_i=E_j$ for some $i,j$, then $D_i\cap D_j\neq\emptyset$;

\end{enumerate}

\end{lem}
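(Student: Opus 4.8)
The plan is to exploit the fact that the hypothesis of only finitely many disjoint pairs is extremely restrictive, via one recurring device. Whenever a handlebody, say $V$, contains two disjoint, non-isotopic essential disks $A$ and $A^{*}$ that are \emph{both} disjoint from a fixed disk $E\subset W$, one may band-sum $A$ and $A^{*}$ along an arc of $S$ that avoids $\partial E$ and, by retwisting the band around $\partial A^{*}$, obtain infinitely many pairwise non-isotopic essential disks of $V$, each still disjoint from $E$ (this is the band-sum operation of Section 2, where such a band-sum of two disjoint disks is again an essential disk). Each of these pairs with $E$, producing infinitely many disjoint pairs and contradicting the hypothesis. I will call this the \emph{band-sum engine} and reduce almost every assertion to it; the natural order is to prove (1) first and then deduce (2).

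For (1), I would treat each clause by contradiction. If $\partial D_i$ were isotopic to $\partial E_i$, then after isotopy $\partial D_i=\partial E_i=c$, so $D_i\cup E_i$ is a $2$-sphere; since $g>2$, cutting $V$ along $D_i$ leaves a component of genus at least $2$, which carries infinitely many essential disks $D'$ disjoint from $D_i$, and each such $D'$ can be made disjoint from $E_i$ by pushing $\partial E_i$ off $c$. This gives infinitely many pairs $(D',E_i)$, so $\partial D_i$ and $\partial E_i$ are not isotopic. Next, if $\partial D_i$ were separating, write $S=S'\cup_{\partial D_i}S''$ with $\partial E_i\subset S'$; since $g>2$ both sides are non-trivial handlebodies, so the $S''$-side contains an essential disk $D^{*}$ with $\partial D^{*}\subset S''$, hence disjoint from both $D_i$ and $E_i$ and not isotopic to $D_i$. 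Then $D_i$ and $D^{*}$ are two disjoint non-isotopic disks of $V$ disjoint from $E_i$, and the band-sum engine (with bands routed inside $S''\cup\partial D_i$) produces infinitely many pairs. Thus $\partial D_i$, and by the symmetric argument $\partial E_i$, is non-separating.

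For the separating-union clause I would assume $\partial D_i\cup\partial E_i$ is non-separating, so that $S\setminus(\partial D_i\cup\partial E_i)$ is connected and, by the previous clause, each curve is individually non-separating. The aim is again to produce a second essential disk of $V$ disjoint from both $D_i$ and $E_i$ and then invoke the engine. Cutting $V$ along $D_i$ gives a genus $g-1\ge2$ handlebody $V_0$ on whose boundary $\partial E_i$ is non-separating; the trouble is that a non-separating curve on a handlebody boundary can be disk-busting, so a second disjoint disk need not exist for homological reasons alone. Excluding this disk-busting possibility is the \textbf{main obstacle}. To resolve it I would use that $\partial E_i$ bounds the disk $E_i$ on the opposite side: applying the outermost-disk analysis of Lemma \ref{2} and the sub-arc count of Lemma \ref{5} to a hypothetical disk-busting configuration, together with the connectivity of the complement, I would route an essential wave through $S\setminus(\partial D_i\cup\partial E_i)$ to construct the required disjoint disk (or directly a new disjoint pair) and then retwist to obtain infinitely many. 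This contradiction forces $\partial D_i\cup\partial E_i$ to separate.

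Finally, (2) follows from (1) by the engine. If $D_i=D_j$ while $E_i\neq E_j$ and $E_i\cap E_j=\emptyset$, then $E_i$ and $E_j$ are two disjoint non-isotopic essential disks of $W$, both disjoint from $D_i$; since $\partial D_i$ is non-separating by (1), the surface $S\setminus\partial D_i$ is connected, so the connecting band can be chosen to avoid $\partial D_i$, and the engine yields infinitely many disks of $W$ disjoint from $D_i$, hence infinitely many pairs $(D_i,\cdot)$, a contradiction. The statement with the roles of $V$ and $W$ exchanged is identical. I expect the disk-busting exclusion in the separating-union clause to be the only genuinely delicate point, with the remaining clauses being routine applications of the band-sum engine.
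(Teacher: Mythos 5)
Your proposal has a genuine gap, and it is exactly at the point you flag as the \emph{main obstacle}: the clause that $\partial D_i\cup\partial E_i$ separates. The difficulty is self-inflicted, because your band-sum engine insists on two \emph{non-isotopic} disjoint disks $A$ and $A^{*}$, so in the case where $\partial D_i$, $\partial E_i$ and $\partial D_i\cup\partial E_i$ are all non-separating you go hunting for a second essential disk of $V$ disjoint from $D_i$ and $E_i$, run into the disk-busting possibility, and propose only a speculative fix ("apply Lemma \ref{2} and Lemma \ref{5} to a hypothetical disk-busting configuration and route an essential wave") --- but those lemmas concern quite different setups (outermost disks meeting the $1$-handle cocore, and curves at distance $\geq 3$ from a separating curve), and you give no actual construction. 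The paper's proof shows the obstacle is illusory: band-sum \emph{two parallel copies of $D_i$ itself} along an arc $c$ in $S\setminus(\partial D_i\cup\partial E_i)$ running from one side of $\partial D_i$ to the other; such an arc exists precisely because the union is assumed non-separating, so the complement is connected. The resulting disk has boundary bounding a once-punctured torus (annulus around $\partial D_i$ plus the band), hence is essential in $V$, is disjoint from $E_i$, and the infinitely many choices of $c$ give infinitely many pairs, a contradiction. Note also why your disk-busting framing attacks the wrong target: this self-band-sum disk is essential in $V$ but becomes boundary-parallel in $V_0=V$ cut along $D_i$ (its boundary bounds a pair of pants plus the two scar disks there), so whether $\partial E_i$ is disk-busting in $V_0$ is simply irrelevant --- the disk you need is not an essential disk of $V_0$ at all.

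The rest of your proposal is essentially sound and close to the paper's argument. Your treatment of the isotopic clause (cut along $D_i$, use infinitely many disks of the genus $\geq 2$ complementary handlebody paired with a pushed-off $E_i$), of the separating clause (a disk $D^{*}$ on the side of $\partial D_i$ not containing $\partial E_i$, then band-sums routed in that side), and of item (2) (band-sum $E_i$ and $E_j$ along arcs avoiding the non-separating curve $\partial D_i$) all work, and differ from the paper only in bookkeeping: the paper reduces the separating and isotopic clauses back to the all-non-separating case via a homology observation, and in (2) routes the bands inside the subsurface $S_1$ bounded by $\partial E_i\cup\partial E_j$ rather than using connectivity of $S\setminus\partial D_i$. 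In all these places your level of rigor about "infinitely many arcs give infinitely many non-isotopic disks" matches the paper's own. But the union-separating clause is the heart of the lemma --- in the paper it is the base case to which everything else is reduced --- and there your argument is missing the one idea that makes it work.
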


\begin{proof} If for some $i$, $\partial D_i$ and  $\partial E_i$ and $\partial D_i\cup\partial E_i$  are all non-separating on $S$ then any band-sum of two copies of $D_i$ along an arc which is disjoint from $\partial E_i$ is disjoint from $E_i$. There are infinitely many such arcs hence there are infinity many pairs of disjoint disks, a contradiction.

If for some $i$, $\partial D_i$ is separating on $S$, then we may find a non-separating disk $D'_i$ such that $D'_i\cap E_i=\emptyset$. If $\partial E_j$ is also separating on $S$, then we may a non-separating disk $E'_i$ such that $D'_i\cap E'_i=\emptyset$. Since $D'_i\cup E_i$(or $D'_i\cup E'_i$) could not be separating on $S$, by the above discussion , there are infinity many pairs of disjoint disks, a contradiction.

If for some $i$,  $\partial D_i$ and  $\partial E_i$ are isotopic, $V\cup_S W$ is reducible and we may also find two separating disk $\partial D'_i$ and  $\partial E'_i$ which are disjoint, by the above discussion , there are infinity many pairs of disjoint disks, a contradiction.

If for some $i,j$, $D_i=D_j$ and $E_i\cap E_j=\emptyset$, then $E_i\cup E_j$ bounds a sub-surface $S_1$ of $S$. $S_1$ is not an annulus and $D_i\cap S_1=\emptyset$.
Any band-sum of $E_i\cup E_j$ along an arc on $S_1$ is disjoint from $D_i$. There are infinitely many such arcs hence there are infinity many pairs of disjoint disks, a contradiction.

\end{proof}

\begin{thm}
Suppose $V\cup_S W$ is a weakly reducible Heegaard splitting of a closed 3-manifold $M$ which admits  only $n(1\leq n<+\infty)$ pairs of disjoint compression disks on distinct sides and $g>2$. Then it admits an untelescoping:
$$M=V\cup_S W=(V_1\cup_{S_1}W_1)\cup_F(W_2\cup_{S_2}V_2),$$ such that $W_i$ has only one separating compressing disk and $d(S_i)\geq 2$, for $i=1,~2$. If $n>1$, at least one of $d(S_i)$ is 2.
\end{thm}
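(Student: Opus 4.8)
The plan is to produce the untelescoping directly from a single weak-reduction pair and then read off the two distance bounds, using Lemma \ref{3} to control everything and Theorem \ref{6} for the final dichotomy. Since $V\cup_S W$ is weakly reducible there is at least one pair $(D_1,E_1)$ of disjoint compression disks on distinct sides; set $\alpha=\partial D_1$ and $\beta=\partial E_1$. By Lemma \ref{3}(1), $\alpha$ and $\beta$ are non-separating and non-isotopic on $S$, while $\alpha\cup\beta$ is separating, so $S\setminus(\alpha\cup\beta)$ has exactly two components, each carrying one copy of $\alpha$ and one of $\beta$.

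First I would build the decomposition. Let $V_1=V\,|\,D_1$ and $V_2=W\,|\,E_1$; since $\partial D_1=\alpha$ is non-separating on $S$, $D_1$ is non-separating in $V$, so $V_1$ is a handlebody of genus $g-1\ge 2$, and similarly for $V_2$. On $S_1=\partial V_1$ the curve $\beta$ survives, and because $\alpha\cup\beta$ separates $S$ while $\alpha$ does not, surgering $S$ along $\alpha$ turns $\beta$ into a separating curve on $S_1$; attaching the $2$-handle with core $E_1$ along $\beta$ produces a compression body $W_1$ whose unique, separating, compressing disk is $E_1$ and whose negative boundary $F=\partial_-W_1$ is $S$ surgered along $\alpha\cup\beta$, a surface with two components. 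The symmetric construction on the $W$-side gives $V_2$, a separating curve $\alpha\subset S_2=\partial V_2$, and a compression body $W_2$ with unique separating disk $D_1$ and $\partial_-W_2=F$. Gluing along the common $F$ yields
$$M=(V_1\cup_{S_1}W_1)\cup_F(W_2\cup_{S_2}V_2),$$
and amalgamating back re-adds the two dual $1$-handles, returning exactly $V\cup_S W$; so this is an untelescoping of the given splitting in which each $W_i$ carries a single separating compressing disk.

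Next I would establish $d(S_i)\ge 2$. Since $W_1$ has the single disk $E_1$ and $W_2$ the single disk $D_1$, we have $d(S_1)=d_{S_1}(\beta,\mathscr{D}_{V_1})$ and $d(S_2)=d_{S_2}(\alpha,\mathscr{D}_{V_2})$. Suppose $d(S_1)\le 1$. If it were $0$, some essential disk $D'\subset V_1$ would have $\partial D'$ isotopic to $\beta$, so $(D',E_1)$ is a disjoint pair on distinct sides with isotopic boundaries, contradicting Lemma \ref{3}(1). If it were $1$, there is $D'\in\mathscr{D}_{V_1}$ with $\partial D'\cap\beta=\emptyset$; then $(D',E_1)$ and $(D_1,E_1)$ are two distinct disjoint pairs sharing $E_1$, so Lemma \ref{3}(2) forces $D_1\cap D'\neq\emptyset$. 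But $D'$ lies in $V_1=V\,|\,D_1$ and so can be isotoped off $D_1$, giving $D_1\cap D'=\emptyset$, a contradiction. Thus $d(S_1)\ge 2$, and the symmetric argument (two pairs sharing $D_1$, Lemma \ref{3}(2)) gives $d(S_2)\ge 2$.

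Finally, the case $n>1$ follows by contraposition of Theorem \ref{6}: the untelescoping above already satisfies the hypothesis that each $W_i$ has a unique separating compressing disk, so if both $d(S_1)\ge 3$ and $d(S_2)\ge 3$ held, Theorem \ref{6} would make $S$ keen weakly reducible, i.e.\ $n=1$. Hence when $n>1$ at least one $d(S_i)\le 2$, and combined with $d(S_i)\ge 2$ this forces that distance to equal $2$. The step I expect to be most delicate is the bookkeeping in the construction: checking that the explicit gluing really is an untelescoping amalgamating back to $S$, that $\beta$ genuinely becomes separating on $S_1$ (and $\alpha$ on $S_2$), and that disks essential in $V_1$ remain essential in $V$, matching the class $\mathscr{D}_2=\mathscr{D}_{V_1}$ of Theorem \ref{6}. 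Once that is in place, both distance bounds are immediate from Lemma \ref{3}.
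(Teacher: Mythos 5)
Your proposal is correct and takes essentially the same route as the paper: untelescope along a single disjoint pair $(D_1,E_1)$, whose boundaries Lemma \ref{3}(1) guarantees are non-separating with separating union so that each $W_i$ has a unique separating compressing disk; rule out $d(S_i)\leq 1$ by playing Lemma \ref{3}(2) against the fact that a compressing disk of $V_1$ can be isotoped off $D_1$; and obtain the $n>1$ dichotomy by contraposition of Theorem \ref{6}. The only difference is cosmetic: you treat $d(S_1)=0$ and $d(S_1)=1$ as separate cases (the $d=0$ case in fact also follows from your $d=1$ argument, since isotopy of boundaries is measured on $S_1$ rather than on $S$), whereas the paper handles both at once with the Lemma \ref{3}(2) argument.
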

\begin{proof}
For any pair of disjoint disk $D_i$ and $E_i$, $\partial D_i$ and  $\partial E_i$ are not isotopic, moreover,  each is non-separating on $S$, and $\partial D_i\cup\partial E_i$  is separating on $S$. $E_i$ is corresponding to the 2-handle attaching to $V_1=\overline{V\setminus N(D_i)}$ and $D_i$ is corresponding to the 2-handle attaching to $\overline{W\setminus N(E_i)}$. If $d(S_1)\leq 1$, there exist a disk $D$ in $V_1$ such that $D\cap E_i=\emptyset$. Since $D_i\cap E_i=\emptyset$, by Lemma\ref{3} we have $D\cap D_i\neq\emptyset$. But $D\subset V_1$ and $D_i$ is corresponding to the 1-handle attached to $V_1$, so that $D\cap D_i=\emptyset$, a contradiction. If $n>1$, by Theorem\ref{6}, at least one of $d(S_i)$ is 2.
\end{proof}

An embedded surface $S$ is said to be critical if it is an index 2 tologically minimal surface. There is an equivalent definition of critical surface\cite{GT}: the compression disks for $S$ can be partitioned into two sets $C_0$ and $C_1$, such that there exists at least one pair of disks $D_i, E_i\in C_i$ on opposite sides of $S$, such that $D_i\cap E_i =\emptyset$, for $i=0, 1$. On the other hand, if $D\in C_i$ and $E\in C_{1-i}$ lie on opposite sides of $S$, then
$D\cap E\neq\emptyset$.

\begin{cor}
Suppose $V\cup_S W$ is a weakly reducible Heegaard splitting of a closed 3-manifold $M$ with more than one but finite many of disjoint compression disks on distinct sides and $g(S)>2$. Then $S$ is a critical Heegaard surface.
\end{cor}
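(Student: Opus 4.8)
The plan is to verify the combinatorial definition of a critical surface recalled just before the statement: I must partition the set of all compressing disks of $S$ into two sets $C_0$ and $C_1$ so that each $C_i$ contains a disjoint pair on opposite sides, while every opposite-side pair with one disk in $C_0$ and the other in $C_1$ has nonempty intersection. The first reduction I would make concerns this last condition: if $D$ and $E$ lie on opposite sides, are disjoint, and sit in different classes, this is forbidden, so the intersection condition holds precisely when every disjoint opposite-side pair is \emph{monochromatic}. Because the hypothesis says the $n$ listed pairs $(D_i,E_i)$ are \emph{all} the disjoint opposite-side pairs up to isotopy, the only constraints on the coloring come from these finitely many pairs, and two of them are forced into the same class exactly when they share a disk, directly or through a chain of pairs (by Lemma \ref{3}(2) two pairs cannot share one disk while their partners are disjoint, so such chains are genuinely constrained). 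Thus criticality reduces to producing two disjoint pairs that lie in distinct ``shared-disk'' classes, after which the remaining unconstrained disks may be assigned arbitrarily.

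Next I would invoke the preceding theorem: since $n>1$, the splitting untelescopes as $(V_1\cup_{S_1}W_1)\cup_F(W_2\cup_{S_2}V_2)$ with $d(S_i)\ge 2$ for $i=1,2$ and, say, $d(S_1)=2$. Write $(D_0,E_0)$ for the disjoint pair that generates this untelescoping, with $\partial E_0=\beta\subset S_1$ bounding the $2$--handle and $\partial D_0=\alpha$ the cocore of the $1$--handle, exactly as in the proof of Theorem \ref{6}. The equality $d_{S_1}(\beta,\mathscr{D}_{V_1})=2$ furnishes an essential disk $D^\ast\in\mathscr{D}_{V_1}$ realizing the minimum and an essential curve $c\subset S_1$ with $\beta\cap c=\emptyset$, $c\cap\partial D^\ast=\emptyset$, and $\beta\cap\partial D^\ast\neq\emptyset$. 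Reading the handle reconstruction of $S$ backwards along $c$, I would build from $c$ and $D^\ast$ a second disjoint pair $(D',E')$ on opposite sides of $S$. The point of the distance being exactly $2$, rather than $\ge 3$, which by Theorem \ref{6} would force keenness, is that this second pair genuinely exists and is not isotopic to $(D_0,E_0)$; the point of its being $\ge 2$ is that $(D',E')$ shares neither disk with $(D_0,E_0)$.

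I would then set $C_0$ to be the shared-disk class of $(D_0,E_0)$ and $C_1$ to be all remaining disks, so that $(D_0,E_0)\subset C_0$ and $(D',E')\subset C_1$ provide a disjoint pair in each class, with the monochromatic condition holding by construction. The heart of the matter, and the step I expect to be the main obstacle, is to show that $(D_0,E_0)$ and $(D',E')$ really lie in \emph{different} classes, that is, that the finitely many pairs do not all link into a single shared-disk component. Here I would argue by contradiction in the spirit of Lemma \ref{3}: if all pairs collapsed into one class, then some non-separating curve $\partial D$ that is disjoint from an essential disk of the opposite handlebody would be disjoint from infinitely many such disks, produced by band-sums and handle slides, yielding infinitely many disjoint pairs and contradicting the hypothesis that only $n$ exist. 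Combining this finiteness obstruction with Lemma \ref{3}(2) pins down the two-class structure and completes the verification that $S$ is a critical Heegaard surface.
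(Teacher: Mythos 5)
Your reduction of criticality to a two-coloring problem --- every disjoint opposite-side pair must be monochromatic, and each class must contain such a pair --- is correct and matches the definition quoted in the paper. But the step you yourself flag as the heart of the matter is exactly where the proof fails. To get a disjoint pair inside $C_1$ you must show that some disjoint pair shares no disk (even through a chain of pairs) with $(D_0,E_0)$, and your argument for this is that otherwise ``band-sums and handle slides'' would produce infinitely many disjoint pairs. No such mechanism is available: the band-sum constructions in the proof of Lemma \ref{3} apply only when $\partial D_i\cup\partial E_i$ is non-separating on $S$, or when two pairs share a disk while the partner disks are \emph{disjoint}; neither situation is created merely by assuming that all pairs lie in a single shared-disk class, since Lemma \ref{3}(2) forces the partners of a shared disk to intersect. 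So the disconnectedness of your shared-disk graph is asserted, not proved. A secondary flaw: your construction of a second pair $(D',E')$ from a curve $c$ realizing $d_{S_1}(\beta,\mathscr{D}_{V_1})=2$ does not work, because a curve at distance two from $\beta$ need not bound a compressing disk on either side of $S$, so ``reading the handle reconstruction backwards'' produces nothing; this step is also unnecessary, since the hypothesis $n>1$ already hands you a second disjoint pair.

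What fills the gap is the observation recorded at the end of Case 1 in the proof of Theorem \ref{6}, which requires only $d(S_i)\geq 2$ and hence applies in the present situation: every disk in $\mathscr{D}_W\setminus\{E_0\}$ intersects $D_0$, and every disk in $\mathscr{D}_V\setminus\{D_0\}$ intersects $E_0$. This says precisely that no disjoint pair other than $(D_0,E_0)$ itself contains $D_0$ or $E_0$, i.e.\ $(D_0,E_0)$ is an isolated vertex of your shared-disk graph. The paper's proof then takes $C_0=\{D_0,E_0\}$ and $C_1$ to be all remaining compression disks: the cross-intersection condition is exactly the Case 1 statement, $C_0$ contains the disjoint pair $(D_0,E_0)$, and since $n>1$ there is another disjoint pair whose two disks are forced by Case 1 to lie in $C_1$. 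Your outline becomes a proof once your finiteness contradiction is replaced by this appeal to Case 1.
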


\begin{proof}
It follows that $M=V\cup_S W=(V_1\cup_{S_1}W_1)\cup_F(W_2\cup_{S_2}V_2),$ such that $W_i$ has only one separating compressing disk and $d(S_i)\geq 2$, for $i=1,~2$ while at least one of $d(S_i)$ is 2. Let $C_0$ be $\{D_0,E_0\}$ and $C_1$ be other essential disks of $S$.
The Case 1 in the proof of Theorem \ref{6} shows that any disk in $\mathscr{D}_W\backslash E_0$ intersects $D_0$, and
any disk in $\mathscr{D}_V\backslash D_0$ intersects $E_0$.  Since ($D_0$, $E_0$) is not the unique disjoint pair of compression disks, there exist $D\in\mathscr{D}_V\backslash D_0$ and $E\in\mathscr{D}_W\backslash E_0$, such that $D\cap E=\emptyset$. Therefore $C_0$ and $C_1$ satisfies the definition of criticality and $S$ is a critical Heegaard surface.
\end{proof}

\bibliographystyle{amsplain}
\bibliographystyle{amsplain}

\end{document}